\newcommand{\rt}{\rightarrow}
\newcommand{\lrt}{\longrightarrow}
\newcommand{\st}{\stackrel}
\newcommand{\CE}{\mathcal{E}}
\newcommand{\Mod}{{\rm{Mod}}}
\newcommand{\Prj}{{\rm{Proj}}}
\newcommand{\GPrj}{{\rm{GProj}}}
\newcommand{\GInj}{{\rm{GInj}}}
\newcommand{\KR}{\mathbb{K}(R)}
\newcommand{\KPR}{{\mathbb{K}({\Prj} \ R)}}
\newcommand{\KAPR}{{\mathbb{K}_{\rm{ac}}({\Prj} \ R)}}
\newcommand{\KTAPR}{{\mathbb{K}_{\rm{tac}}({\Prj} \ R)}}
\newcommand{\KPG}{{\mathbb{K}({\Prj} \ \Gamma)}}
\newcommand{\KAPG}{{\mathbb{K}_{\rm{ac}}({\Prj} \ \Gamma)}}
\newcommand{\KTAPG}{{\mathbb{K}_{\rm{tac}}({\Prj} \ \Gamma)}}
\newcommand{\KPS}{{\mathbb{K}({\Prj} \ S)}}
\newcommand{\KTAPS}{{\mathbb{K}_{\rm{tac}}({\Prj} \ S)}}
\newcommand{\op}{{\rm{op}}}
\newcommand{\Coker}{{\rm{Coker}}}
\newcommand{\Ker}{{\rm{Ker}}}
\newcommand{\Hom}{{\rm{Hom}}}
\newcommand{\Ext}{{\rm{Ext}}}
\newtheorem{theorem}{Theorem}[section]
\newtheorem{corollary}[theorem]{Corollary}
\newtheorem{lemma}[theorem]{Lemma}
\newtheorem{proposition}[theorem]{Proposition}
\theoremstyle{definition}
\newtheorem{remark}[theorem]{Remark}
\theoremstyle{plain}
\theoremstyle{definition}
\numberwithin{equation}{section}
\begin{document}
\pagestyle{myheadings}
\markboth{H. Eshraghi, R. Hafezi, Sh. Salarian, Z.W. Li}{Gorenstein Projective Modules Over Triangular Matrix Rings}

\title{\bf  \large Gorenstein Projective Modules Over Triangular Matrix Rings\footnote{This research is supported in part by a grant from IPM (No.91130218)}}
\author{\bf H. Eshraghi \\
{\small eshraghi@sci.ui.ac.ir}\\
\\
{\bf R. Hafezi} \\
{\small r.hafezi@sci.ui.ac.ir} \\
\\
{\bf Sh. Salarian}\\
{\small salarian@ipm.ir}\\
{\small \it Department of Mathematics, University of Isfahan, P.O.Box: 81746-73441,}\\
 {\small \it  Isfahan, Iran and }\\
{\small \it School of Mathematics, Institute for Research in Fundamental Sciences (IPM), }\\
{\small \it P.O.Box: 19395-5746, Tehran, Iran.}\\
\\
\bf Z.W. Li\\
{\small zhiweili@sjtu.edu.cn}\\
{\small \it Department of Mathematics, Jiangsu Normal University, Xuzhou 221116,}\\
{\small \it  People's Republic of China}
}
\date{}

\setlength\absleftindent{0in }
\setlength\absrightindent{0in}
\maketitle

\begin{onecolabstract}
\noindent
{\bf Abstract.} Let $R$ and $S$ be Artin algebras and $\Gamma$ be their triangular matrix extension via a bimodule ${}_S M_R$. We study totally acyclic complexes of projective $\Gamma$-modules and obtain a complete description of Gorenstein projective $\Gamma$-modules. We then use this to construct some examples of Cohen-Macaulay finite and virtually Gorenstein triangular matrix algebras.
\end{onecolabstract}

\noindent
{\bf 2000 Mathematics Subject Classification:} 18G25, 16G10.

\noindent
{\bf\small Keywords:} Gorenstein projective, totally acyclic, Cohen-Macaulay finite.

\section{{\normalsize Introduction}}

Let $A$ be a not necessarily commutative ring with nonzero unity, $\Mod A$ (resp. mod$A$) be the category of all (resp. finitely generated) left $A$-modules, and $\Prj(A)$ (resp. {\rm proj}($A$)) be the full subcategory of all (resp. finitely generated) projective $A$-modules. Following \cite{EJ}, an $A$-module $X$ is said to be Gorenstein projective if it is a syzygy of a totally acyclic complex of projective $A$-modules, i.e. an exact complex of projective $A$-modules  which will be
left exact under applying $\Hom_A(-,\,P)$ for every projective $A$-module $P$. Throughout the paper $\GPrj(A)$ (resp. {\rm Gproj}($A$)) denotes the category of all (resp. finitely generated) Gorenstein projective $A$-modules; Gorenstein injective $A$-modules are defined dually.

\vspace{.05 cm}

Regardless of their role as a foundation of Gorenstein homological algebra, Gorenstein projective modules usually receive particular importance from several points of view. They are used in the theory of singularities (see e.g. \cite{Y} for a standard text) and in Tate cohomology of algebras (\cite{AM}, \cite{B}).  Also Cohen-Macaulay finite algebras, having recently been of particular attention (\cite{Bb}, \cite{C}, \cite{LZb}), are defined in terms of Gorenstein projective modules; recall that an Artin algebra $\Lambda$ is said to be of finite Cohen-Macaulay type (C.M. finite for short) provided it has only finitely many, up to isomorphism, indecomposable finitely generated Gorenstein projective modules, that is to say, the subcategory ${\rm Gproj}(\Lambda)$ is of finite type.

\vspace{.05 cm}

This article aims at studying Gorenstein projective modules over triangular matrix rings. Such modules have been studied in some special cases, see for example \cite{LZa}, \cite{XZ}, and \cite{Z}. The approach we will take here is to
study totally acyclic complexes of projective modules and regard Gorenstein projective modules as their syzygies. So throughout the rest of this paper, we fix a formal triangular matrix ring
\begin{equation*}
\Gamma = \left(
\begin{array}{ccc}
R & 0 \\
M & S
\end{array} \right)
\end{equation*}
where $R$ and $S$ are two associative not necessarily commutative rings with unity and  $_{S}M_R$ is an $S-R$ bimodule.
It is well-known that when $R$ and $S$ are $k$-Artin algebras, $k$ being a commutative artinian ring, and $M$ is finitely generated over $k$ which acts centrally on $M$, then $\Gamma$ is a $k$-Artin algebra. Therefore whenever $\Gamma$ is discussed from representation-theoretic aspects, we are assuming that it is an Artin algebra.

We then will use these information to construct some accessible examples of Cohen-Macaulay finite Artin algebras; namely, Cohen-Macaulay finite triangular matrix Artin algebras. In particular, we will deal with how $\Gamma$ inherits C.M. finiteness from $R$ and $S$ and vice versa. Finally, Gorenstein projective $\Gamma$-modules are used to study how virtually Gorensteinness may transfer from $R$ and $S$ to $\Gamma$; this is a concept which is closely related to the notion of Cohen-Macaulay finiteness.

\vspace{.2 cm}

\section{{\normalsize Gorenstein Projective Modules}}
Keep the notations of the previous section. Recall (e.g. from \cite{ARS}) that the category $\Mod\Gamma$ may be identified with the category $\mathfrak{C}$ consisting of all triples $(X,\,Y)_\varphi$ where $X\in\Mod R$, $Y\in\Mod S$ and $\varphi:M\otimes_R X\lrt Y$ is an $S$-linear map. Using this, we define two evaluation functors $e^1:\Mod\Gamma\lrt\Mod R$ and $e^2:\Mod\Gamma\lrt\Mod S$ in the following way: $e^1((X, Y)_\varphi)=X$ and $e^2((X, Y)_\varphi)=Y$, for every $\Gamma$-module $(X, Y)_\varphi$ and with the obvious rules over the morphisms. It is well-known that these functors both admit left
and right adjoints. The subscriptions $\lambda$ and $\rho$ are reserved respectively to denote the left and right adjoint functors.

Before defining these adjoint functors we need to recall that
any $\Gamma$-module could also be represented as $(X, Y)_\varphi$ where $X\in\Mod R$, $Y\in\Mod S$, and  $\varphi:X\lrt\Hom_S(M,\,Y)$ is a $R$-linear map. Hence for every $R$-module $X$, $e^1_\lambda(X)=(X,\,M\otimes_R X)_1$ and
$e^1_\rho(X)=(X,\,0)_0$. Similarly, for any $S$-module $Y$, $e^2_\lambda(Y)=(0,\,Y)_0$ and $e^2_\rho(Y)=(\Hom_S(M,\,Y),\, Y)_1$. More briefly, there are two adjoint pair of functors $(e^i,\,e^i_\rho)$ and $(e^i_\lambda,\,e^i)$, $i=1,2$.

\vspace{.1cm}

We need to extend these pairs to the corresponding homotopy categories. So we briefly recall that $\KR$, the homotopy category of $R$, has all complexes of $R$-modules as objects and its morphisms are the homotopy equivalence classes of the chain maps. $\KPR$ is the homotopy category formed by all complexes of projective $R$-modules and $\KAPR$ (resp. $\KTAPR$) is the full subcategory of $\KPR$ of all acyclic (resp. totally acyclic) complexes. Note that they are standard examples of triangulated categories \cite{N} and that extending the aforementioned adjoint pairs to the homotopy categories, a procedure that may be done in a natural way, leads in triangulated functors denoted by $k^1$, $k^2$ (the extended evaluation functors) and $k^1_\lambda$,
$k^1_\rho$, $k^2_\lambda$, and $k^2_\rho$ (their corresponding adjoints).

\vspace{.1cm}

The following result is well-known; see e.g. \cite[Proposition 2.1]{ET}.

\begin{lemma}\label{TPrj}
A $\Gamma$-module $(X, Y)_\varphi$ is projective if and only if the following statements are satisfied.
\begin{itemize}
\item[(i)] $\varphi:M\otimes_R X\lrt Y$ is a $S$-monomorphism.
\item[(ii)] $X\in\Prj(R)$ and $\Coker(\varphi)\in\Prj(S)$.
\end{itemize}
\end{lemma}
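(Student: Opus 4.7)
The plan is to exploit the description of $\Mod\Gamma$ as the category $\mathfrak{C}$ of triples together with the two left adjoints $e^1_\lambda$ and $e^2_\lambda$; both of these preserve projectivity since the right adjoints $e^1, e^2$ are obviously exact (exactness in $\mathfrak{C}$ is tested componentwise). As a preliminary step I would identify the left regular representation: viewed in $\mathfrak{C}$, the $\Gamma$-module $\Gamma$ decomposes as
\[
\Gamma\;\cong\;e^1_\lambda(R)\oplus e^2_\lambda(S)\;=\;(R,\,M\oplus S)_{\iota},
\]
where $\iota:M\otimes_R R\cong M\hookrightarrow M\oplus S$ is the canonical split inclusion. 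Hence every free $\Gamma$-module has the form $(F,\,(M\otimes_R F)\oplus F')_{\iota}$ with $F$ free over $R$ and $F'$ free over $S$, and in particular satisfies both (i) and (ii).

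For the ``if'' direction, assume $(X,Y)_\varphi$ satisfies (i) and (ii). Reading the $S$-module sequence $0\to M\otimes_R X\st{\varphi}{\to}Y\to\Coker\varphi\to 0$ componentwise in $\mathfrak{C}$ lifts it to a short exact sequence of $\Gamma$-modules
\[
0\lrt e^1_\lambda(X)\st{(1_X,\varphi)}{\lrt}(X,Y)_\varphi\lrt e^2_\lambda(\Coker\varphi)\lrt 0.
\]
Since $X\in\Prj(R)$ and $\Coker\varphi\in\Prj(S)$, both outer terms are projective $\Gamma$-modules by the preliminary step, so the sequence splits and $(X,Y)_\varphi$ is projective.

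For the ``only if'' direction, I would show that the class $\mathcal{P}\subseteq\Mod\Gamma$ of modules satisfying (i) and (ii) is closed under direct sums and direct summands. Closure under direct sums is immediate. For summands, any $\Gamma$-module idempotent of $(X,Y)_\varphi$ is a pair of idempotents $e_1\in\mathrm{End}_R(X)$, $e_2\in\mathrm{End}_S(Y)$ satisfying $e_2\varphi=\varphi\circ(M\otimes e_1)$; hence the decompositions $X=X_1\oplus X_2$ and $Y=Y_1\oplus Y_2$ are compatible with $\varphi$, giving $\varphi=\varphi_1\oplus\varphi_2$ with $\varphi_i:M\otimes_R X_i\to Y_i$. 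Monomorphy of $\varphi$, projectivity of $X$ over $R$, and projectivity of $\Coker\varphi$ over $S$ then all descend to each summand. Combined with the description of the free modules above, this yields (i) and (ii) for every projective $\Gamma$-module, since every such module is a direct summand of a free one.

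The only delicate point is the observation that a $\Gamma$-module idempotent decomposition of $(X,Y)_\varphi$ automatically splits the structural map $\varphi$ compatibly; this is what makes the summand-closure step work. Once that is granted, the remainder is a routine check using the definition of morphisms in $\mathfrak{C}$ together with the fact that the left adjoints $e^1_\lambda$ and $e^2_\lambda$ carry projective modules to projective $\Gamma$-modules.
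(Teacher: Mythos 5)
Your argument is correct, and it is essentially the standard proof of this fact; the paper itself gives no proof but simply cites \cite[Proposition 2.1]{ET}, where the same strategy (decompose ${}_\Gamma\Gamma$ as $e^1_\lambda(R)\oplus e^2_\lambda(S)$, note that the class defined by (i) and (ii) contains the free modules and is closed under summands, and use the split sequence $0\to e^1_\lambda(X)\to(X,Y)_\varphi\to e^2_\lambda(\Coker\varphi)\to 0$ for the converse) is carried out. The one ``delicate point'' you flag --- that an idempotent of $(X,Y)_\varphi$ is a compatible pair $(e_1,e_2)$ and hence splits $\varphi$ as $\varphi_1\oplus\varphi_2$ --- is handled correctly.
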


It should be pointed out that an analogous description for injective $\Gamma$-modules, which will be used in Theorem \ref{TGIG}, has been included in \cite{ET}.

\vspace{.2 cm}

Let $\CE\in\KPG$. According to the definitions, we represent $\CE$ as
$$ \CE:\quad\cdots\lrt(\CE_1^a,\,\CE_1^b)_{\varphi_1}\lrt (\CE_0^a,\,\CE_0^b)_{\varphi_0}\lrt (\CE_{-1}^a,\,\CE_{-1}^b)_{\varphi_{-1}}\lrt\cdots$$
where, for any integer $i$, $(\CE_i^a,\,\CE_i^b)_{\varphi_i}$ is a projective $\Gamma$-module. Note that, in view of the above lemma, there exists for any integer $i$ a split exact sequence $0\lrt M\otimes_R\CE_i^a\lrt\CE_i^b\lrt\Coker\varphi_i\lrt 0$. This induces a complex $$\cdots\lrt\Coker(\varphi_1)
\lrt\Coker(\varphi_0)\lrt\Coker(\varphi_{-1})\lrt\cdots$$ which we denote by $\Coker\CE$.

Before stating the following lemma, we recall that the symbol ${}^\perp$ is reserved to denote the (left and right) orthogonal classes with respect to the functor $\Ext^1(-, -)$. Also, ${\rm Add}({}_S M)$ contains all $S$-modules which are direct summands of arbitrary direct sums of copies of $M$.

\begin{lemma}\label{LTAPRSG}

Let $\CE\in\KPR$ and $\CE'\in\KPS$.
\begin{itemize}
  \item[(i)]  Assume that the functor $M\otimes_R-$ takes every acyclic complex of projective $R$-modules to an acyclic complex of $S$-modules.  Then  $\CE\in\KTAPR$ if and only if $k^1_\lambda(\CE)\in\KTAPG$.
  \item[(ii)] Assume ${\rm Add}({}_S M)\subseteq \GPrj(S)^\perp$. Then $\CE'\in\KTAPS$ if and only if $k^2_\lambda(\CE')\in\KTAPG$.
\end{itemize}
\end{lemma}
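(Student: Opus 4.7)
The plan is to extract from Lemma~\ref{TPrj} the precise structure of projective $\Gamma$-modules and combine it with the adjunctions $(e^i_\lambda,e^i)$ to translate total acyclicity on the $\Gamma$-side into Hom-vanishing conditions on the $R$- or $S$-side. The central observation is that for any projective $\Gamma$-module $(X,Y)_\varphi$, Lemma~\ref{TPrj} says $\varphi$ is a split monomorphism with projective cokernel, so
$$(X,Y)_\varphi \;\cong\; e^1_\lambda(X)\oplus e^2_\lambda(\Coker\varphi).$$
Thus the total acyclicity of a complex $\DD\in\KPG$ can be checked by testing $\Hom_\Gamma(\DD,-)$ against the two kinds of projectives $e^1_\lambda(P)$, $P\in\Prj(R)$, and $e^2_\lambda(Q)$, $Q\in\Prj(S)$. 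Via the adjunctions these tests reduce to acyclicity of $\Hom_R(k^1(\DD),P)$ and $\Hom_S(k^2(\DD),Q)$, respectively.

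For part (i), $k^1_\lambda(\CE)=(\CE,M\otimes_R\CE)_1$ is automatically a complex of projective $\Gamma$-modules by Lemma~\ref{TPrj}, and its acyclicity in $\Mod\Gamma$ is equivalent to simultaneous acyclicity of $\CE$ and $M\otimes_R\CE$; under the standing hypothesis on $M\otimes_R-$, these are both equivalent to acyclicity of $\CE$ alone. Applying the general reduction with $\DD=k^1_\lambda(\CE)$, and using $k^1\circ k^1_\lambda=\id$ together with $k^1\circ k^2_\lambda=0$, the $e^2_\lambda$-test is automatic and the $e^1_\lambda$-test yields exactly the condition that $\Hom_R(\CE,P)$ be acyclic for every $P\in\Prj(R)$, i.e.\ $\CE\in\KTAPR$.

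For part (ii), $k^2_\lambda(\CE')=(0,\CE')_0$ is a complex of projective $\Gamma$-modules by Lemma~\ref{TPrj} and is acyclic iff $\CE'$ is. Testing against $e^2_\lambda(Q)$ uses $k^2\circ k^2_\lambda=\id$ and reproduces the total acyclicity condition on $\CE'$; the $e^1_\lambda$-test reduces instead to acyclicity of $\Hom_S(\CE',M\otimes_R P)$. This last condition is the main obstacle and is exactly where the hypothesis on $\mathrm{Add}({}_SM)$ enters. Assuming $\CE'\in\KTAPS$, its syzygies $K_n=\Ker(\CE'_n\to\CE'_{n-1})$ are Gorenstein projective $S$-modules, and a dimension-shifting argument identifies each cohomology group of $\Hom_S(\CE',N)$ with some $\Ext^1_S(K_n,N)$. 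Since $M\otimes_R P$ is a direct summand of $M^{(I)}$ for an appropriate index set $I$, it lies in $\mathrm{Add}({}_SM)\subseteq\GPrj(S)^\perp$, and so these Ext-groups vanish. The converse direction in (ii) uses only the $e^2_\lambda$-test and does not invoke the hypothesis on $M$.
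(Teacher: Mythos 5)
Your argument is essentially the paper's: the published proof is a one‑line appeal to Lemma \ref{TPrj} and adjointness, and you supply exactly the details it suppresses (in particular the identification of the $e^1_\lambda(P)$--test for $k^2_\lambda(\CE')$ with acyclicity of $\Hom_S(\CE',M\otimes_R P)$, and the syzygy/$\Ext^1$ argument that makes hypothesis $(2)$ do its work). One sentence in your setup is false as stated, though: for an \emph{arbitrary} $\DD\in\KPG$ the test $\Hom_\Gamma(\DD,e^1_\lambda(P))$ does not reduce to $\Hom_R(k^1(\DD),P)$, nor does $\Hom_\Gamma(\DD,e^2_\lambda(Q))$ reduce to $\Hom_S(k^2(\DD),Q)$. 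The functors $e^i_\lambda$ are \emph{left} adjoints, so they compute maps \emph{out of}, not \emph{into}, their images; maps into $e^2_\lambda(Q)=(0,Q)_0$ are computed by the cokernel functor, $\Hom_\Gamma((A,B)_\psi,(0,Q)_0)\cong\Hom_S(\Coker\psi,Q)$, while maps into $e^1_\lambda(P)$ only fit into an exact sequence involving $\Hom_S(\Coker\psi,M\otimes_R P)$ and $\Hom_R(A,P)$. Your actual computations never rely on this false general reduction: in both parts you evaluate $\Hom_\Gamma(k^i_\lambda(\CE),e^j_\lambda(-))$ by applying the adjunction $(e^i_\lambda,e^i)$ to the \emph{source}, which is legitimate precisely because the complexes being tested lie in the image of $k^i_\lambda$ --- indeed, in (ii) the outcome $\Hom_S(\CE',M\otimes_R P)$ visibly contradicts the general reduction you announced. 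So the proof stands; just delete or correct that one sentence.
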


\begin{proof}
  Since the functors $e^1_\lambda$ and $e^2_\lambda$ preserve projective modules, the proof follows easily from Lemma \ref{TPrj} and the adjointness properties. Note that the assumption on $M$ in the first statement yields that  $k^1_\lambda(\CE)\in\KAPG$ whenever $\CE\in\KTAPR$. Likewise, in the second one, the assumption gives that when $\CE'\in\KTAPS$,then for any projective $R$-module $P$, the complex $\Hom_S(\CE', M\otimes_R P)$ is acyclic from which the desired result might be deduced in conjunction with Lemma \ref{TPrj}.
\end{proof}

As established in the above lemma, the following statements are crucial throughout the rest of this paper:

\vspace{.1 cm}

$(1)$ {\it The functor $M\otimes_R-$ takes every acyclic complex of projective $R$-modules to an acyclic complex of $S$-modules}.

\vspace{.1 cm}

$(2)$ {\it ${\rm Add}({}_S M)\subseteq \GPrj(S)^\perp=\{Y\in\Mod S: \Ext^1_S(G, Y)=0, \forall G\in\GPrj(S)$\}}.

\vspace{.1 cm}

\noindent We just refer to them by mentioning their assigned numbers.

\vspace{.1 cm}

The following proposition provides a complete description of totally acyclic complexes of projective $\Gamma$-modules.

\begin{proposition}\label{TTAP}
 Let the statements $(1)$ and $(2)$ hold and let $\CE\in\KPG$. Then $\CE\in\KTAPG$ if and only if $k^1(\CE)\in\KTAPR$ and $\Coker\CE\in\KTAPS$.
\end{proposition}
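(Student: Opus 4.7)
The plan is to exploit the degreewise split short exact sequence of complexes of $S$-modules
$$0 \lrt M \otimes_R k^1(\CE) \lrt e^2(\CE) \lrt \Coker \CE \lrt 0$$
(coming from Lemma \ref{TPrj}), together with a reduction of the class of projective test modules. From the splitting of the structure map, every projective $\Gamma$-module decomposes as $e^1_\lambda(P) \oplus e^2_\lambda(Q)$ with $P \in \Prj(R)$ and $Q \in \Prj(S)$, so total acyclicity of $\CE$ reduces to verifying that $\Hom_\Gamma(\CE, e^i_\lambda(-))$ is acyclic on projectives of $R$ and $S$ respectively.

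For plain acyclicity, since $e^1, e^2$ are exact, $\CE$ is acyclic if and only if $k^1(\CE)$ and $e^2(\CE)$ are. Using condition $(1)$ together with the long exact homology sequence of the displayed short exact sequence, this is in turn equivalent to $k^1(\CE)$ and $\Coker \CE$ both being acyclic. For the $\Hom$-vanishing, a direct calculation in $\mathfrak{C}$ yields $\Hom_\Gamma(\CE, e^2_\lambda(Q)) \cong \Hom_S(\Coker \CE, Q)$, which disposes of the $e^2_\lambda$-test modules. For the $e^1_\lambda$-test modules, I would apply $\Hom_\Gamma(\CE, -)$ to the short exact sequence
$$0 \lrt (0, M \otimes_R P)_0 \lrt (P, M \otimes_R P)_1 \lrt (P, 0)_0 \lrt 0$$
in $\Mod \Gamma$. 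Since each term of $\CE$ is projective this remains exact after applying $\Hom_\Gamma(\CE, -)$, and using the adjointness $(e^1, e^1_\rho)$ to rewrite the third term gives
$$0 \lrt \Hom_S(\Coker \CE, M \otimes_R P) \lrt \Hom_\Gamma(\CE, e^1_\lambda(P)) \lrt \Hom_R(k^1(\CE), P) \lrt 0.$$

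Both implications of the proposition then fall out of the long exact homology sequence of this last display: the forward direction extracts $\Coker \CE \in \KTAPS$ from the $e^2_\lambda$-test of $\CE$ and combines the $e^1_\lambda$-test with the sequence above to deduce $k^1(\CE) \in \KTAPR$; the backward direction simply runs the same argument in reverse.

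I expect the main obstacle to be a ``bootstrap'' step needed in both directions: one must pass from knowing $\Coker \CE \in \KTAPS$ (which only supplies acyclicity of $\Hom_S(\Coker \CE, -)$ on genuine projective $S$-modules) to knowing that $\Hom_S(\Coker \CE, M \otimes_R P)$ is acyclic, even though $M \otimes_R P$ is typically not projective. This is where condition $(2)$ becomes essential: the syzygies of $\Coker \CE$ are Gorenstein projective $S$-modules, and $M \otimes_R P \in {\rm Add}({}_S M) \subseteq \GPrj(S)^\perp$, so a standard dimension-shifting argument in the long exact $\Ext$-sequence furnishes the required acyclicity.
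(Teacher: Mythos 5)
Your proof is correct and is essentially the paper's argument in unwound form: the paper packages the same content as the degreewise split triangle $k^1_\lambda k^1(\CE)\to\CE\to k^2_\lambda(\Coker\CE)$ in $\KPG$ together with Lemma \ref{LTAPRSG}, whose proof contains exactly the ``bootstrap'' you flag (condition $(2)$ plus dimension shifting gives acyclicity of $\Hom_S(\Coker\CE,\,M\otimes_R P)$). Your decomposition of the projective test modules as $e^1_\lambda(P)\oplus e^2_\lambda(Q)$ and the two short exact sequences of $\Hom$-complexes are just an explicit rendering of the same adjunction computations, so there is no substantive difference.
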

\begin{proof}
Assume first that $\CE\in\KTAPG$. Then the complexes $k^1(\CE)$ and $k^2(\CE)$ are acyclic. Since, by $(1)$, $M\otimes k^1(\CE)$ is acyclic, the acyclicity of $\Coker\CE$ follows. Then there exists a triangle
  \[\xymatrix{k_{\lambda}^1 k^1(\CE) \ar[r] & \CE \ar[r] & k^2_\lambda(\Coker\CE)
\ar@{~>}[r] & } \ \ \ (*) \] in $\KPG$. For any $Q\in\KPS$, it is clear that $k^2_\lambda(Q)\in\KPG$. Apply the homological functor $\Hom_{\KPG}(-,\,k^2_\lambda(Q))$
on $(*)$ to obtain the obvious long exact sequence. Since for any $R$-module $U$ and $S$-module $V$, $\Hom_{\KPG}(k_{\lambda}^1 k^1(U),\,k^2_\lambda(V))=0$, the assumption implies that $\Coker\CE\in\KTAPS$ so, by Lemma \ref{LTAPRSG}, $k^2_\lambda(\Coker\CE)\in\KTAPS$. The rest of the claim follows from the above triangle and another application
of Lemma \ref{LTAPRSG}; the converse may be proved using a similar argument.
\end{proof}

\begin{lemma}\label{LELGP}
  \begin{itemize}
   \item[(i)] If the statement $(1)$ holds, then for any Gorenstein projective $R$-module $U$, $e^1_\lambda(U)$ is a Gorenstein projective $\Gamma$-module.
    \item[(ii)]  If the statement $(2)$ holds, then for any Gorenstein projective $S$-module $V$, $e^2_\lambda(V)$ is a Gorenstein projective $\Gamma$-module.
  \end{itemize}
\end{lemma}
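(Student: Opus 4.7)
The plan is, in both cases, to realise the given Gorenstein projective module as a syzygy of a totally acyclic complex of projectives, apply the relevant left adjoint $e^i_\lambda$ degreewise, and invoke Lemma \ref{LTAPRSG} to conclude that the resulting complex of projective $\Gamma$-modules is totally acyclic. A small additional verification is then needed to see that $e^i_\lambda(U)$ (resp.\ $e^i_\lambda(V)$) is itself a syzygy of the new complex.

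For part (i), let $U\in\GPrj(R)$ and choose $\CE\in\KTAPR$ with $U\cong\Ker(\CE_0\lrt\CE_{-1})$. Since $e^1_\lambda$ carries projective $R$-modules to projective $\Gamma$-modules, applying it termwise produces $k^1_\lambda(\CE)\in\KPG$; by Lemma \ref{LTAPRSG}(i) together with statement $(1)$, this complex in fact belongs to $\KTAPG$. It remains to identify the kernel of $k^1_\lambda(\CE)_0\lrt k^1_\lambda(\CE)_{-1}$ with $e^1_\lambda(U)=(U,M\otimes_R U)_1$. Kernels in $\mathfrak{C}$ are computed componentwise, so the $R$-coordinate is visibly $U$; for the $S$-coordinate, right exactness of $M\otimes_R-$ identifies $M\otimes_R U$ with $\Coker(M\otimes_R\CE_2\lrt M\otimes_R\CE_1)$, and statement $(1)$---asserting the acyclicity of $M\otimes_R\CE$---identifies this cokernel in turn with $\Ker(M\otimes_R\CE_0\lrt M\otimes_R\CE_{-1})$, as required.

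For part (ii), let $V\in\GPrj(S)$ and choose $\CE'\in\KTAPS$ having $V$ as a syzygy. The functor $e^2_\lambda\colon\Mod S\lrt\Mod\Gamma$, $Y\mapsto(0,Y)_0$, is exact and preserves projectives, so it automatically carries any syzygy of $\CE'$ to the corresponding syzygy of $k^2_\lambda(\CE')$; in particular $(0,V)_0=e^2_\lambda(V)$ is a syzygy of $k^2_\lambda(\CE')$, and Lemma \ref{LTAPRSG}(ii) together with statement $(2)$ places this complex in $\KTAPG$. The main obstacle is thus confined to part~(i): because $e^1_\lambda$ is only right exact, statement $(1)$ must be invoked a second time to ensure that the $S$-coordinate of the naive componentwise kernel is genuinely $M\otimes_R U$ rather than some proper quotient of it; in part~(ii) this subtlety disappears because $e^2_\lambda$ is outright exact.
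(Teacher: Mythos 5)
Your proof is correct and follows the same route as the paper, which simply declares the lemma "a direct consequence of Lemma \ref{LTAPRSG}": realise the module as a syzygy of a totally acyclic complex, apply $k^i_\lambda$, and use Lemma \ref{LTAPRSG} for total acyclicity. Your extra care in identifying the syzygy of $k^1_\lambda(\CE)$ with $e^1_\lambda(U)$ — where statement $(1)$ is needed a second time because $M\otimes_R-$ is only right exact — is exactly the detail the paper leaves implicit, and you handle it correctly.
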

\begin{proof}
  Is a direct consequence of Lemma \ref{LTAPRSG}.
\end{proof}

We are now in the position to prove the main result, which is a complete description of Gorenstein projective $\Gamma$-modules.

\vspace{.1 cm}

\begin{theorem}\label{TGPG}
 Suppose that both of the statements $(1)$ and $(2)$ are satisfied. Then a $\Gamma$-module $(X,\,Y)_\varphi$ is Gorenstein projective if and only if
  \begin{itemize}
    \item[(a)] $X$ and $\Coker\varphi$ are Gorenstein projective respectively as $R$ and $S$-modules.
    \item[(b)] $\varphi$ is a monomorphism.
\end{itemize}
\end{theorem}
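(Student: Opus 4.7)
The plan is to handle the two implications separately: the forward direction will use Proposition \ref{TTAP} together with Lemma \ref{TPrj}, while the converse will assemble Lemma \ref{LELGP} with the fact that Gorenstein projective modules are closed under extensions.

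For the forward direction, I assume $(X,Y)_\varphi$ is Gorenstein projective over $\Gamma$, so it appears as a syzygy of some totally acyclic complex $\CE\in\KTAPG$, which I may index so that $(X,Y)_\varphi=\Ker\bigl(\CE_0\lrt\CE_{-1}\bigr)$. Proposition \ref{TTAP} gives $k^1(\CE)\in\KTAPR$ and $\Coker\CE\in\KTAPS$, so passing to the relevant cycles immediately yields that $X$ is Gorenstein projective over $R$ and $\Coker\varphi$ is Gorenstein projective over $S$, proving (a). For (b), Lemma \ref{TPrj} tells me that every $\varphi_i$ is a monomorphism with $\Coker\varphi_i\in\Prj(S)$, so the natural sequence
\[
0\lrt M\otimes_R k^1(\CE)\lrt k^2(\CE)\lrt\Coker\CE\lrt 0
\]
is a degreewise split exact sequence of complexes, and all three complexes are acyclic (using $(1)$ for the leftmost). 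A straightforward diagram chase using the acyclicity of the leftmost complex shows that taking zeroth cycles preserves exactness, so I obtain an exact sequence $0\to M\otimes_R X\to Y\to\Coker\varphi\to 0$ whose first arrow is precisely $\varphi$; hence $\varphi$ is injective.

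For the converse, assume (a) and (b). The injectivity of $\varphi$ yields a short exact sequence of $\Gamma$-modules
\[
0\lrt (X,\,M\otimes_R X)_1\lrt (X,\,Y)_\varphi\lrt (0,\,\Coker\varphi)_0\lrt 0,
\]
whose outer terms are precisely $e^1_\lambda(X)$ and $e^2_\lambda(\Coker\varphi)$. By Lemma \ref{LELGP}(i) and (ii), both outer terms belong to $\GPrj(\Gamma)$. Since the class of Gorenstein projective modules is closed under extensions, $(X,Y)_\varphi$ is Gorenstein projective as well.

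The main obstacle will lie in the forward direction, specifically in the identification of the induced map on zeroth cycles as $\varphi$ itself; this requires carefully tracking the identification of $\Mod\Gamma$ with the triple category and keeping track of how the structure maps $\varphi_i$ restrict when passing from the projective terms of $\CE$ to their syzygies. Once this bookkeeping is in place, the monomorphism assertion is automatic, and the converse is very clean, invoking only the earlier lemmas and a standard closure property of $\GPrj$.
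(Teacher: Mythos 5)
Your proposal is correct and follows essentially the same route as the paper: the converse uses the short exact sequence $0\to e^1_\lambda(X)\to (X,Y)_\varphi\to e^2_\lambda(\Coker\varphi)\to 0$ with Lemma \ref{LELGP} and extension-closure of $\GPrj(\Gamma)$, and the forward direction realizes $(X,Y)_\varphi$ as a syzygy of some $\CE\in\KTAPG$ and applies Proposition \ref{TTAP}, with the monomorphism claim coming from the acyclicity of $M\otimes k^1(\CE)$. Your added bookkeeping identifying the induced map on zeroth cycles with $\varphi$ is exactly the detail the paper leaves implicit.
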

\begin{proof} The sufficiency follows from the exact sequence $$0\lrt (X,\,M\otimes_R X)_1\lrt (X,\,Y)_\varphi\lrt (0,\,\Coker\varphi)_0\lrt 0\quad (**)$$ in conjunction
  with Lemma \ref{LELGP}. If, conversely, $(X,\,Y)_\varphi$ is a Gorenstein projective $\Gamma$-module, then it is, say, the 0-th syzygy of some $\CE\in\KTAPG$. Note
  that by Proposition \ref{TTAP}, $\Coker\CE\in\KTAPS$ and $k^1(\CE)\in\KTAPR$. Moreover, $X=\ker(k^1(\CE)_0\rt k^1(\CE)_{-1})$ and $\Coker\varphi=\ker((\Coker\CE)_0\rt(\Coker\CE)_{-1})$.   Hence the statement $(a)$ follows. That $\varphi$ is a monomorphism follows because $M\otimes k^1(\CE)$ is an exact complex.
\end{proof}

In the sequel, we will proceed in the reverse direction. Namely, we will examine how close we are to the statements $(1)$ and $(2)$ by knowing that the above description for all Gorenstein projective $\Gamma$-modules is valid.

\begin{remark}\label{RCGP}
Suppose that all Gorenstein projective $\Gamma$-modules can be classified  as mentioned in the above theorem.
We first show that $(2)$ holds.  To this end, it is enough to prove that for any index set $I$,
$\oplus_I M \in {\rm (GProj(S))^\perp}$ or, equivalently, any short exact sequence $$0 \lrt\oplus_I M \st{\alpha}\lrt D \st{\beta}\lrt G \lrt 0$$ with $G$ Gorenstein projective $S$-module splits. From the hypothesis, it follows that $(\oplus_I R, D)_\alpha$ is a Gorenstein projective $\Gamma$-module. So it is the 0-th syzygy of some $\CE\in\KTAPG$. Hence we get a short exact sequence $$0 \lrt (\oplus_I R, D)_\alpha \st{f}\lrt \CE_0 \st{g}\lrt (K_1,K_2)_\gamma \lrt 0$$ in which $(K_1, K_2)_\gamma$ is also a Gorenstein projective $\Gamma$-module. In view of the fact that $K_1$ should be Gorenstein projective as $R$-module, one infers that the sequence $0 \lrt \oplus_I R\st{f^a} \lrt \CE^a_0 \st{g^a}\lrt K_1 \lrt 0$ splits. Now apply Lemma \ref{TPrj} and the commutative exact diagram

\[ \xymatrix{
0 \ar[r] & \oplus_I M \ar[r]^{1_M\otimes f^a} \ar[d]^{\alpha} &  M\otimes \CE^a_0 \ar[r]^{1_M\otimes g^a} \ar[d]^{\varphi} &
M\otimes K_1 \ar[r] \ar[d]^{\gamma} & 0  \\
0\ar[r] & D\ar[r]^{f^b} & \CE^b_0 \ar[r]^{g^b} & K_2 \ar[r] & 0 }\]

\noindent to show that the sequence at the beginning of the argument is split.

Next, we prove that ${\rm Tor}_1^R(M,\,P)=0$ for any Gorenstein projective $R$-module $P$. So let $P$ be such a module. Then, by the hypothesis, $(P,\,M\otimes_R P)_1$  is Gorenstein projective and so is the 0-th syzygy of some $\CE\in\KTAPG$. Hence we get a short exact sequence
$$0\lrt(K_1,\, K_2)_\gamma\lrt \CE_{-1}\lrt (P,\,M\otimes_R P)_1\lrt 0$$ with $(K_1,\,K_2)_\gamma$ Gorenstein projective. This gives a commutative exact diagram

\[\xymatrix{
 & M\otimes_R K_1\ar[r]^{\omega}\ar[d]^\gamma & M\otimes_R \CE_{-1}^a\ar[r]\ar[d] & M\otimes_R P\ar[r]\ar[d]^{=} & 0\\
0\ar[r] & K_2\ar[r] & \CE_{-1}^b\ar[r] & M\otimes_R P\ar[r] & 0.
    } \]
Since, by the assumption, $\psi$ is a monomorphism it follows that $\omega$ is a monomorphism, implying that ${\rm Tor}_1^R(M,\,P)=0$. It is straight forward to check that this is equivalent to saying that $M\otimes_R-$ preserves every totally acyclic complex of projective $R$-modules.
\end{remark}

\vspace{.1 cm}

Recall that the ${\rm T}_2$-extension of a ring $R$ is given by  \begin{equation*}
{\rm T}_2(R) = \left(
\begin{array}{ccc}
R & 0 \\
R & R
\end{array} \right)
\end{equation*}
and that every module over ${\rm T}_2(R)$ is a homomorphism $X\st{\varphi}\lrt Y$ of $R$-modules. We include the following corollary of Theorem \ref{TGPG}.

\begin{corollary} A ${\rm T}_2(R)$-module $X\st{\varphi}\lrt Y$ is Gorenstein projective if and only if $X$ and $\Coker \varphi$ are both Gorenstein projective $R$-modules and $\varphi$ is a monomorphism.
\end{corollary}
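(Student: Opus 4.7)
The plan is to deduce this corollary as a direct specialization of Theorem \ref{TGPG} to the case $S=R$ and ${}_S M_R = {}_R R_R$, the regular bimodule. Under this identification $\Gamma = {\rm T}_2(R)$ and, via the equivalence $\Mod\Gamma \simeq \mathfrak{C}$, a $\Gamma$-module $(X,Y)_\varphi$ corresponds exactly to an $R$-homomorphism $\varphi : X \to Y$ (since $M\otimes_R X = R \otimes_R X \cong X$). Therefore, once Theorem \ref{TGPG} is applied, the claim becomes the statement of the corollary.

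The only thing to check is that the two running hypotheses $(1)$ and $(2)$ that feed into Theorem \ref{TGPG} are satisfied in this specialized setting. First I would verify $(1)$: the functor $M\otimes_R - = R\otimes_R -$ is naturally isomorphic to the identity functor on $\Mod R$, so it clearly sends every acyclic complex of projective $R$-modules to itself, which is acyclic. Next I would verify $(2)$: here ${\rm Add}({}_S M) = {\rm Add}({}_R R) = \Prj(R)$, and the containment $\Prj(R) \subseteq \GPrj(R)^\perp$ is a standard and immediate fact from the definition of Gorenstein projectivity, since for any $G \in \GPrj(R)$ and any projective $P$ one has $\Ext^1_R(G,P) = 0$ (this follows from $G$ being the $0$-th syzygy of a totally acyclic complex of projectives, whose $\Hom(-,P)$ is acyclic).

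With both $(1)$ and $(2)$ in force, Theorem \ref{TGPG} applies: $(X,Y)_\varphi$ is a Gorenstein projective $\Gamma$-module if and only if $X$ and $\Coker\varphi$ are Gorenstein projective $R$-modules and $\varphi$ is a monomorphism. This is precisely the claim. There is essentially no obstacle here; the entire content lies in noticing that the regular bimodule trivially meets the hypotheses, so the main theorem specializes cleanly.
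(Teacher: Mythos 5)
Your proposal is correct and is exactly the intended derivation: the paper states this as an immediate corollary of Theorem \ref{TGPG} with $S=R$ and $M={}_RR_R$, and your verification of hypotheses $(1)$ (since $R\otimes_R-$ is the identity) and $(2)$ (since ${\rm Add}({}_RR)=\Prj(R)\subseteq\GPrj(R)^\perp$ by total acyclicity) fills in precisely the routine checks the paper leaves implicit.
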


It is worth noting that the dual version of the above arguments may be applied to deduce a corresponding classification for Gorenstein injective modules over $\Gamma$. So we
skip the proofs and record the dual version of Theorem \ref{TGPG}. Just note that the following conditions which are, in some sense, dual to $(1)$ and $(2)$ should be fulfilled.

\vspace{.1 cm}

$(3)$ {\it The functor $\Hom_S(M, -)$ takes every acyclic complex of injective $S$-modules to an acyclic complex of $R$-modules}.

\vspace{.1 cm}

$(4)$ {\it $\Hom_S(M, I)\in{}^\perp{\rm GInj}(R)$, for all injective $S$-module $I$}.

\begin{theorem}\label{TGIG}
Suppose that the statements $(3)$ and $(4)$ are satisfied. Then a $\Gamma$-module $(X,\, Y)_\varphi$ is Gorenstein injective if and only if
\begin{itemize}
\item[(a)] $Y$ and $\ker\varphi$ are Gorenstein injective respectively as $S$ and $R$-modules.
\item[(b)] $\varphi:X\lrt\Hom_S(M,\,Y)$ is an epimorphism.
\end{itemize}
\end{theorem}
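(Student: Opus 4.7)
The plan is to mirror, step by step, the development leading to Theorem \ref{TGPG}. First, one invokes the dual of Lemma \ref{TPrj} (established in \cite{ET}): a $\Gamma$-module $(X,Y)_\varphi$, with $\varphi:X\lrt\Hom_S(M,Y)$, is injective if and only if $\varphi$ is surjective, $Y\in\Inj(S)$, and $\ker\varphi\in\Inj(R)$. A direct check using this description shows that the right adjoints $e^1_\rho$ and $e^2_\rho$ preserve injectivity, so they lift to triangulated functors $k^1_\rho,\,k^2_\rho$ between the corresponding homotopy categories of injectives.

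Next, using the adjunctions $(e^i,\,e^i_\rho)$ together with condition $(3)$ (resp.\ $(4)$), I would establish the dual of Lemma \ref{LTAPRSG}: a complex $\CJ\in\mathbb{K}(\Inj\ R)$ is totally acyclic if and only if $k^1_\rho(\CJ)\in\mathbb{K}_{\rm{tac}}(\Inj\ \Gamma)$, and analogously for $k^2_\rho$ and $S$. For any $\CI\in\mathbb{K}(\Inj\ \Gamma)$ the injective description makes each sequence $0\lrt\ker\varphi_i\lrt\CI_i^a\lrt\Hom_S(M,\,\CI_i^b)\lrt 0$ split exact, so the termwise kernels assemble into a complex $\ker\CI\in\mathbb{K}(\Inj\ R)$. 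The dual of Proposition \ref{TTAP} then reads: under $(3)$ and $(4)$, $\CI\in\mathbb{K}_{\rm{tac}}(\Inj\ \Gamma)$ if and only if $k^2(\CI)\in\mathbb{K}_{\rm{tac}}(\Inj\ S)$ and $\ker\CI\in\mathbb{K}_{\rm{tac}}(\Inj\ R)$. Its proof is dual to that of Proposition \ref{TTAP}, built around the distinguished triangle $k^1_\rho(\ker\CI)\lrt\CI\lrt k^2_\rho(k^2(\CI))\rightsquigarrow$ and the vanishing $\Hom_{\mathbb{K}(\Inj\ \Gamma)}(k^1_\rho(U),\,k^2_\rho(V))=0$ that follows from the orthogonality between the two adjoint pairs.

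For the theorem itself, the key device is the short exact sequence
\[0\lrt (\ker\varphi,\,0)_0\lrt (X,\,Y)_\varphi\lrt (\Hom_S(M,\,Y),\,Y)_1\lrt 0,\]
i.e.\ $0\lrt e^1_\rho(\ker\varphi)\lrt (X,\,Y)_\varphi\lrt e^2_\rho(Y)\lrt 0$. For sufficiency, items (a) and (b) together with the dual of Lemma \ref{LELGP} (itself extracted from the dual of Lemma \ref{LTAPRSG}) imply that the two outer terms are Gorenstein injective $\Gamma$-modules, hence so is the middle one. For necessity, if $(X,Y)_\varphi$ is Gorenstein injective then it is the $0$-th cosyzygy of some $\CI\in\mathbb{K}_{\rm{tac}}(\Inj\ \Gamma)$; the dual of Proposition \ref{TTAP} yields $k^2(\CI)\in\mathbb{K}_{\rm{tac}}(\Inj\ S)$ and $\ker\CI\in\mathbb{K}_{\rm{tac}}(\Inj\ R)$, giving Gorenstein injectivity of $Y$ and of $\ker\varphi$, while surjectivity of $\varphi$ is forced by the acyclicity of $\Hom_S(M,\,k^2(\CI))$ delivered by hypothesis $(3)$.

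The main obstacle is getting the dual adjointness machinery right in the triangulated setting: one must verify that $k^1_\rho$ and $k^2_\rho$ really do carry totally acyclic complexes to totally acyclic complexes (this is precisely where hypotheses $(3)$ and $(4)$ enter, playing the roles that $(1)$ and $(2)$ did in the projective case), and confirm the orthogonality used to extract the triangle above. Once this dual scaffolding is erected, every remaining step is a formal translation of its projective counterpart, in much the same way as the analogous Remark \ref{RCGP} can be dualised to test the sharpness of $(3)$ and $(4)$.
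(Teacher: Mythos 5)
Your proposal is correct and is essentially the paper's own argument: the authors give no separate proof of Theorem \ref{TGIG}, stating only that it follows by dualizing Lemma \ref{TPrj} (via the injective description in \cite{ET}), Lemma \ref{LTAPRSG}, Proposition \ref{TTAP} and Lemma \ref{LELGP}, and your write-up carries out precisely that dualization, including the correct dual triangle $k^1_\rho(\ker\CI)\rt\CI\rt k^2_\rho k^2(\CI)$ and the dual sequence $0\rt e^1_\rho(\ker\varphi)\rt(X,Y)_\varphi\rt e^2_\rho(Y)\rt 0$. The only slip is the pairing in your dual of Lemma \ref{LTAPRSG}: condition $(4)$ is what makes $k^1_\rho(\CJ)$ totally acyclic (an injective $\Gamma$-module has first component $\Hom_S(M,J)\oplus(\text{injective})$), while condition $(3)$ is the one needed for $k^2_\rho$, since $k^2_\rho(\CJ')=(\Hom_S(M,\CJ'),\CJ')_1$ must first of all be acyclic --- a swap that is harmless here because both hypotheses are assumed and you invoke $(3)$ correctly when deducing that $\varphi$ is an epimorphism.
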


\begin{corollary} A ${\rm T}_2(R)$-module $X\st{\varphi}\lrt Y$ is Gorenstein injective if and only if $Y$ and $\Ker \varphi$ are both Gorenstein injective $R$-modules and $\varphi$ is an epimorphism.
\end{corollary}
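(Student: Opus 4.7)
The plan is to derive the corollary as a direct specialization of Theorem \ref{TGIG} to the case $S=R$, $M={}_R R_R$, which produces $\Gamma=T_2(R)$. Under this choice the functor $\Hom_S(M,-)$ becomes $\Hom_R(R,-)\cong\mathrm{id}_{\Mod R}$, so a $\Gamma$-module in the $\Hom$-presentation, namely $(X,Y)_\varphi$ with $\varphi:X\lrt\Hom_S(M,Y)$, collapses to an $R$-linear map $X\st{\varphi}\lrt Y$. Thus the characterization of the theorem translates verbatim to the statement of the corollary, \emph{provided} that the hypotheses $(3)$ and $(4)$ are verified in this setting.

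The verification is routine. For $(3)$: since $\Hom_R(R,-)$ is the identity, it obviously takes acyclic complexes of injective $R$-modules to acyclic complexes. For $(4)$: $\Hom_R(R,I)\cong I$ for every injective $R$-module $I$, so the condition amounts to the inclusion $\Inj(R)\subseteq{}^\perp\GInj(R)$. To see this, let $I$ be injective and $G$ Gorenstein injective. Pick a totally acyclic complex of injectives $\cdots\lrt I^{-1}\lrt I^0\lrt I^1\lrt\cdots$ with $G=\Ker(I^0\lrt I^1)$. The right truncation $0\lrt G\lrt I^0\lrt I^1\lrt\cdots$ is an injective resolution of $G$, hence for $i\geq 1$ one has $\Ext^i_R(I,G)=H^i\!\bigl(\Hom_R(I,I^{\geq 0})\bigr)$; but these cohomologies coincide with the cohomologies of the full complex $\Hom_R(I,I^\bullet)$, which vanish by total acyclicity. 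Hence $\Ext^1_R(I,G)=0$ and condition $(4)$ holds.

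With $(3)$ and $(4)$ in place, Theorem \ref{TGIG} applies and yields: $X\st{\varphi}\lrt Y$ is Gorenstein injective over $T_2(R)$ iff $Y$ and $\Ker\varphi$ are Gorenstein injective $R$-modules and $\varphi$ is an epimorphism. The argument is essentially formal; there is no substantive obstacle, the only point worth pausing on being the identification of the $\Hom$-presentation of $T_2(R)$-modules with ordinary morphisms $X\to Y$, which is immediate because $M=R$ makes the structure map $\varphi$ land in $Y$ itself.
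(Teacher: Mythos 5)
Your proposal is correct and is exactly the route the paper intends: the corollary is stated as an immediate specialization of Theorem \ref{TGIG} to $S=R$, $M={}_RR_R$, and your verification of conditions $(3)$ (trivially, since $\Hom_R(R,-)$ is the identity) and $(4)$ (the standard fact that $\Ext^{\geq 1}_R(I,G)=0$ for $I$ injective and $G$ Gorenstein injective, read off from the totally acyclic complex defining $G$) is accurate.
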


\section{{\normalsize C.M. Finiteness And Virtually Gorensteinness}}

 {\it 3.1. C.M. finiteness}. We will now deal with how C.M. finiteness may transfer from the algebras $R$ and $S$ to the triangular matrix algebra $\Gamma$. We start with the following easy observation. Assume that $(1)$ and $(2)$ are satisfied and $\Gamma$ is of finite Cohen-Macaulay type. For an indecomposable finitely generated Gorenstein projective $R$-module $X$, it is routine to verify that $e^1_\lambda(X)=(X,\,M\otimes_R X)_{1}$ is an indecomposable finitely generated $\Gamma$-module which is Gorenstein projective by Lemma \ref{LELGP}. Since $\Gamma$ is C.M. finite one has only finitely many, up to isomorphism, choices for $X$, that is, $R$ should be of finite Cohen-Macaulay type. A similar argument applies to deduce that $S$ is also of finite Cohen-Macaulay type.

Therefore $\Gamma$ C.M. finite implies $R$ and $S$ C.M. finite while there are examples for which the converse fails. For, let $k$ be an algebraically closed field and consider the self injective $k$-algebra $\Lambda_t=k[x]/(x^t)$ with $t>5$. By \cite[Example 4.17]{Bb}, the ${\rm T}_2$-extension of $\Lambda_t$ \begin{equation*} {\rm T}_2(\Lambda_t)=\left(
\begin{array}{ccc}
\Lambda_t& 0 \\
\Lambda_t & \Lambda_t
\end{array} \right)
\end{equation*}
is of infinite Cohen-Macaulay type while $\Lambda_t$ itself is an algebra even of finite representation type.

\vspace{.1cm}

However the material prepared in the previous section makes it possible to overcome this defect in some cases. Recall that an Artin algebra $\Lambda$ is said to be {\it Cohen-Macaulay free} if ${\rm Gproj}(\Lambda)={\rm proj}(\Lambda)$.

\begin{proposition}\label{TMR1} Assume $(1)$ and $(2)$ hold.
\begin{itemize}
\item[(i)]  Let $R$ be C.M. free. Then $\Gamma$ is of finite Cohen-Macaulay type if and only if so is $S$.
\item[(ii)] Let $S$ be C.M. free. Then  $\Gamma$ is of finite Cohen-Macaulay type if and only if so is $R$.
\item[(iii)] $\Gamma$ is C.M. free if and only if so are $R$ and $S$.
\end{itemize}
\end{proposition}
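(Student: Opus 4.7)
The observation immediately preceding this proposition already delivers one direction of each of (i) and (ii): if $\Gamma$ is C.M. finite, then both $R$ and $S$ are C.M. finite. The ``only if'' of (iii) follows by the same reasoning combined with Lemma \ref{TPrj}: Lemma \ref{LELGP} sends a finitely generated Gorenstein projective $U\in{\rm Gproj}(R)$ to $e^1_\lambda(U)\in{\rm Gproj}(\Gamma)$, and if the latter is forced to be projective, Lemma \ref{TPrj} reads back that $U\in{\rm proj}(R)$; the $S$ side is symmetric via $e^2_\lambda$. The substantive content therefore lies in the reverse directions of (i) and (ii), and the ``if'' of (iii).

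The unifying tool will be the canonical short exact sequence $(**)$ from the proof of Theorem \ref{TGPG}, attached to any Gorenstein projective $(X,Y)_\varphi$:
$$0\lrt e^1_\lambda(X)\lrt (X,Y)_\varphi\lrt e^2_\lambda(\Coker\varphi)\lrt 0.$$
The plan is to show that $(**)$ splits whenever either $R$ or $S$ is C.M. free, and then to exploit this to catalog the indecomposables of ${\rm Gproj}(\Gamma)$. By the $(e^2_\lambda,e^2)$-adjunction,
$$\Ext^1_\Gamma(e^2_\lambda(V),\,e^1_\lambda(U))\cong\Ext^1_S(V,\,M\otimes_R U)$$
for every $V\in\Mod S$ and $U\in\Mod R$. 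In case (i), C.M. freeness of $R$ together with Theorem \ref{TGPG}(a) forces $X\in{\rm proj}(R)$; then $M\otimes_R X\in{\rm Add}({}_SM)\subseteq\GPrj(S)^\perp$ by statement $(2)$, while $\Coker\varphi\in\GPrj(S)$, so $\Ext^1_S(\Coker\varphi,\,M\otimes_R X)=0$ and $(**)$ splits. In case (ii), C.M. freeness of $S$ makes $\Coker\varphi$ projective, whence the same Ext vanishes trivially and $(**)$ again splits.

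Once the splitting is in hand, every indecomposable finitely generated Gorenstein projective $\Gamma$-module must be isomorphic to $e^1_\lambda(U)$ for some indecomposable $U\in{\rm Gproj}(R)$ or to $e^2_\lambda(V)$ for some indecomposable $V\in{\rm Gproj}(S)$, because the functors $e^i_\lambda$ commute with finite direct sums and send indecomposables to indecomposables (as already noted in the preceding observation). Combined with the assumed C.M. finiteness of $S$ (in (i)) or $R$ (in (ii)) together with C.M. freeness of the other factor, it follows that ${\rm Gproj}(\Gamma)$ has only finitely many indecomposable objects up to isomorphism. The ``if'' of (iii) is then immediate: Theorem \ref{TGPG} together with Lemma \ref{TPrj} shows that every $(X,Y)_\varphi\in{\rm Gproj}(\Gamma)$ is projective when both $R$ and $S$ are C.M. free, since its $R$-component is projective, its cokernel is projective, and its structure map is a monomorphism.

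The main obstacle I anticipate is the $\Ext$-vanishing step in case (i); there, statement $(2)$ must be invoked carefully to place $M\otimes_R X$ inside $\GPrj(S)^\perp$, and the adjunction has to be applied between the right modules. The rest of the argument reduces to extracting indecomposable summands and invoking the classification of Section~2.
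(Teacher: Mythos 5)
Your proof is correct and follows essentially the same route as the paper: split the canonical sequence $(**)$ and use indecomposability to reduce every indecomposable of ${\rm Gproj}(\Gamma)$ to $e^1_\lambda$ or $e^2_\lambda$ of an indecomposable. The only (harmless) variation is in case (i), where the paper obtains the splitting directly by noting that $(X,\,M\otimes_R X)_1$ is a projective $\Gamma$-module (Lemma \ref{TPrj}), so $\Ext^1_\Gamma$ from the Gorenstein projective $(0,\Coker\varphi)_0$ into it vanishes, whereas you route the same vanishing through the adjunction and statement $(2)$; both are valid.
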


\begin{proof}
$(i)$ Let $(X,\,Y)_\varphi$ be an indecomposable finitely generated Gorenstein projective $\Gamma$-module. Consider the short exact sequence $(**)$ (in the proof of Theorem \ref{TGPG}) in $\Mod\Gamma$.
One may apply Theorem \ref{TGPG} to deduce that $X\in{\rm Gproj}(R)={\rm proj}(R)$, that is, $(X,\, M\otimes_R X)_1$ is a finitely generated projective $\Gamma$-module by \ref{TPrj}. On the other hand, it follows from Theorem \ref{TGPG} that $(0,\,\Coker\varphi)_0\in{\rm Gproj}(\Gamma)$, i.e., the sequence $(**)$ splits. Since  $(X,\,Y)_\varphi$ is indecomposable, either  $(X,\,Y)_\varphi=(0,\,\Coker\varphi)_0$ or $(X,\,Y)_\varphi=(X,\,M\otimes_R X)_1$. It follows that in
the first case $\Coker\varphi$ is an indecomposable finitely generated Gorenstein projective $S$-module while in the second one $X$ is an indecomposable finitely generated projective $R$-module. This gives the sufficiency; the
necessity follows from the above remarks and the proof of $(ii)$ is similar. Also $(iii)$ can be deduced directly from Lemma \ref{TPrj} and Theorem \ref{TGPG}.
\end{proof}

\vspace{.1cm}

Note that important examples arise when $R$ or $S$ are algebras of finite global dimension since for such algebras projectives and Gorenstein projectives coincide.

The following example illustrates an application of Theorem \ref{TGPG}.

\example\label{EALL}
Let $A_n$ be the quiver $v_1\rt\cdots\rt v_n$. Then $RA_n$, the path algebra of $A_n$ over the Artin algebra $R$, is the lower triangular $n\times n$ matrix algebra defined inductively as
\begin{equation*}
T_n(R)=\left(
\begin{array}{ccc}
R &  0 \\
N & {\rm T}_{n-1}(R)
\end{array} \right)
\end{equation*}
 where $N=R\oplus R\cdots\oplus R$ ($n-1$ copies), as a right $R$-module, whose left $RA_{n-1}={\rm T}_{n-1}(R)$-structure corresponds to the representation
 $R\rt\cdots\rt R$ of $A_{n-1}$ and ${\rm T}_1(R)=R$. Since $N$ is projective both as $R$- and ${\rm T}_{n-1}(R)$-module, from Theorem \ref{TGPG} we deduce that any Gorenstein projective ${\rm T}_n(R)$-module is of the form $(X,\,Y)_\varphi$ where $X\in\GPrj(R)$, $\varphi:N\otimes_R X\lrt Y$ is a monomorphism with $\Coker\varphi\in\GPrj({\rm T}_{n-1}(R))$. Assume now that ${\rm T}_{n-1}(R)$ is of finite Cohen-Macaulay type for some $n\geq 1$ and pick a module $Z$ with ${\rm add }(Z)={\rm Gproj}({\rm T}_{n-1}(R))$. Set $\Lambda={\rm End}(Z)^{\op}$. Following \cite{LZa}, we let $\mathcal{C}$ be the subcategory of the morphism category of ${\rm proj}(\Lambda)$ consisting of all monomorphisms $f:P_2\lrt P_1$ with $\Coker f\in {\rm proj}^{\leq 1} \Omega(\Lambda)$ where the latter is the subcategory of all torsionless modules of projective dimension at most 1; see pages 1809-1811 of \cite{LZa} for the details.
 Moreover, put $\mathcal{C'}$ be the subcategory of $\mathcal{C}$ consisting of all $f:P_2\lrt P_1$ satisfying the extra assumption $P_2=\Hom_{T_{n-1}(R)}(Z,\, N\otimes_R X)$ where $X$ is a Gorenstein projective $R$-module; it is closed under direct summands and the functor $\Hom_{{\rm T}_{n-1}(R)}(Z,\,-)$ provides an embedding ${\rm Gproj}({\rm T}_{n}(R))\hookrightarrow \mathcal{C'}$. Therefore by the proof of \cite[Theorem 1.2]{LZa} if ${\rm proj}^{\leq 1} \Omega(\Lambda)$ is of finite type, the subcategory $\mathcal{C'}$, and so ${\rm Gproj}({\rm T}_n(R))$  is of finite type, that is, ${\rm T}_n(R)$ is of finite Cohen-Macaulay type. This provides an inductive procedure to relate C.M. finiteness of ${\rm T}_n(R)$ to that of ${\rm T}_m(R)$ for $m<n$.

\vspace {.2 cm}

{\it 3.2. Virtually Gorensteinness}. Recall that the concept of virtually Gorenstein algebras was defined in \cite{BR} as a natural generalization of Gorenstein algebras. By definition, an Artin algebra $\Lambda$ is said to be virtually Gorenstein provided $\GPrj(\Lambda)^\perp={}^\perp\GInj(\Lambda)$ where $\GInj(\Lambda)$ is the subcategory of all Gorenstein injective $\Lambda$-modules and the orthogonal classes are defined with respect to $\Ext^1$. Such algebras have been illuminated to be in close relationship with C.M. finite algebras; see \cite{Bb} for more information.
 Theorem \ref{TGPG} then prepares a good tool to prove the following theorem.

\begin{proposition}\label{TMR2}
Assume that the statements $(1)$, $(2)$, $(3)$, and $(4)$ are all true. Then $\Gamma$ is virtually Gorenstein if and only if $R$ and $S$ are.
\end{proposition}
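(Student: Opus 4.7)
The plan is to describe the two orthogonal classes $\GPrj(\Gamma)^\perp$ and ${}^\perp\GInj(\Gamma)$ componentwise, and then read the proposition off these descriptions. First I would show that a $\Gamma$-module $(A,B)_\psi$ lies in $\GPrj(\Gamma)^\perp$ if and only if $A \in \GPrj(R)^\perp$ and $B \in \GPrj(S)^\perp$. The forward direction combines Lemma~\ref{LELGP} (so that $e^1_\lambda(X)$ and $e^2_\lambda(Y)$ are Gorenstein projective over $\Gamma$ whenever $X \in \GPrj(R)$ and $Y \in \GPrj(S)$) with the $\Ext^1$-adjunction identities $\Ext^1_\Gamma(e^1_\lambda(X),(A,B)_\psi) \cong \Ext^1_R(X,A)$ and $\Ext^1_\Gamma(e^2_\lambda(Y),(A,B)_\psi) \cong \Ext^1_S(Y,B)$. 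For the converse I would apply $\Hom_\Gamma(-,(A,B)_\psi)$ to the canonical short exact sequence $(**)$ attached to an arbitrary GP $\Gamma$-module $(X,Y)_\varphi$; the flanking terms of the resulting long $\Ext$-sequence, namely $\Ext^1_R(X,A)$ and $\Ext^1_S(\Coker\varphi,B)$, both vanish by hypothesis, forcing the middle term to vanish as well.

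Dually, using Theorem~\ref{TGIG}, the analogous short exact sequence
\[
0 \longrightarrow e^1_\rho(\ker\varphi) \longrightarrow (X,Y)_\varphi \longrightarrow e^2_\rho(Y) \longrightarrow 0
\]
for a GI $\Gamma$-module (in the representation $\varphi: X \to \Hom_S(M,Y)$), and the dual of Lemma~\ref{LELGP} (valid under $(3)$ and $(4)$, asserting that $e^i_\rho$ sends GI modules to GI $\Gamma$-modules), the same bootstrap yields
\[
(A,B)_\psi \in {}^\perp\GInj(\Gamma) \iff A \in {}^\perp\GInj(R) \text{ and } B \in {}^\perp\GInj(S).
\]
Since both characterizations involve $A$ and $B$ only, with no dependence on the structure map $\psi$, specializing the test objects to $(A,0)_0$ and $(0,B)_0$ shows that $\GPrj(\Gamma)^\perp = {}^\perp\GInj(\Gamma)$ is equivalent to the simultaneous equalities $\GPrj(R)^\perp = {}^\perp\GInj(R)$ and $\GPrj(S)^\perp = {}^\perp\GInj(S)$, i.e., to $R$ and $S$ both being virtually Gorenstein.

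The most delicate step will be justifying the $\Ext^1$-adjunction identities when the relevant adjoint functor is not exact, namely for $e^1_\lambda$ on the projective side and $e^2_\rho$ on the injective side. For $e^1_\lambda$ I would exploit assumption $(1)$: tensoring with $M$ a totally acyclic complex of projectives that realises $X \in \GPrj(R)$ as a syzygy yields an acyclic complex, whence $\Tor_1^R(M,X)=0$; consequently $e^1_\lambda$ preserves any projective presentation $0 \to K \to P \to X \to 0$ of $X$, and together with the basic $\Hom$-adjunction of $(e^1_\lambda,e^1)$ this produces the required $\Ext^1$-isomorphism. The dual argument using $(3)$ handles $e^2_\rho$ applied to GI $S$-modules (where $\Ext^1_S(M,B)=0$ for $B \in \GInj(S)$ is extracted from $(3)$), while $e^2_\lambda$ and $e^1_\rho$ are exact and therefore cause no difficulty.
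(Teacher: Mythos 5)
Your proposal is correct and follows essentially the same route as the paper: the componentwise characterization of $\GPrj(\Gamma)^\perp$ (and dually of ${}^\perp\GInj(\Gamma)$) via Lemma \ref{LELGP}, the adjunction isomorphisms on $\Ext^1$, and the short exact sequence $(**)$, with the exactness of $e^1_\lambda$ on resolutions of Gorenstein projectives secured by statement $(1)$ exactly as in the paper's parenthetical remark. Your treatment is, if anything, slightly more explicit than the paper's about why the converse inclusion and the final specialization to $(A,0)_0$ and $(0,B)_0$ work.
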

\begin{proof}
 For a $\Gamma$-module $(X,\,Y)_\varphi$, we claim that $(X,\,Y)_\varphi\in\GPrj(\Gamma)^\perp$ if and only if $X\in\GPrj(R)^\perp$ and $Y\in\GPrj(S)^\perp$. To see this, assume first
 that $(X,\,Y)_\varphi\in\GPrj(\Gamma)^\perp$ and $U$ is a Gorenstein projective $R$-module. By \ref{LELGP}, $e^1_\lambda(U)=(U,\,M\otimes_R U)_1\in\GPrj(\Gamma)$. Therefore $0=\Ext^1_\Gamma(e^1_\lambda(U),\,(X,\,Y)_\varphi)\simeq\Ext^1_R(U,\,X)$ implying that  $X\in\GPrj(R)^\perp$. (Note that the latter isomorphism follows by considering a projective resolution of $U$ and applying the functor $e^1_\lambda$, which is exact in view of statement ($1$), to obtain a projective resolution of $e^1_\lambda(U)$.) The other statement follows similarly. The converse is an immediate consequence of the adjoint isomorphisms and the
 short exact sequence $(**)$ (in the proof of Theorem \ref{TGPG}) which is valid for every Gorenstein projective $\Gamma$-module. One may apply a dual argument and Theorem \ref{TGIG} to elicit a similar statement concerning ${}^\perp\GInj(\Gamma)$. These, in particular, yield that $\Gamma$ is virtually Gorenstein if and only if $R$ and $S$ are.
\end{proof}

\example
Let $R$ be an algebra. Clearly the ${\rm T}_2$-extension of $R$, ${\rm T}_2(R)$, satisfies the hypothesis of the above proposition. Hence, ${\rm T}_2(R)$ is virtually Gorenstein if and only if so is $R$. In particular, ${\rm T}_2(R)$ is virtually Gorenstein if $R$ is of finite representation type.

\section*{Acknowledgments} We would like to thank the referee for her/his useful comments. Sh. Salarian thanks the Center of Excellence for Mathematics (University of Isfahan).
H. Eshraghi and R. Hafezi thank the Institute for Research in Fundamental Sciences (IPM).

\small

\end{document}